\documentclass[12pt]{amsart}
\usepackage{amssymb,bbold}
\usepackage[all]{xy}

\theoremstyle{plain}
\newtheorem{theorem}{Theorem}

\newtheorem{lemma}[theorem]{Lemma}

\theoremstyle{definition}

\renewcommand{\le}{\leqslant}
\renewcommand{\ge}{\geqslant}

\begin{document}
\baselineskip 18pt

\title[A Note on Unbounded Convergences in Gremlin Projective Tensor Products]
      {A Note on Unbounded Convergences in Fremlin Projective Tensor Products}
\author[O.~Zabeti]{Omid Zabeti}
\address[O.~Zabeti]
  {Department of Mathematics, Faculty of Mathematics, Statistics, and Computer science,
   University of Sistan and Baluchestan, Zahedan,
   P.O. Box 98135-674. Iran}
\email{o.zabeti@gmail.com}
\keywords{Fremlin projective tensor product, unbounded norm convergence, unbounded absolute weak convergence, Banach lattice.}
\subjclass[2020]{Primary:  46M05. Secondary:  46A40.}
\maketitle

\begin{abstract}
We prove that the Fremlin projective tensor product of Banach lattices preserves unbounded norm convergence and unbounded absolute weak convergence of elementary tensors without any additional assumptions. The main tool is a tensor-lattice estimate controlling expressions of the form
\[
\lvert x_{\alpha}\otimes y_{\beta}-x\otimes y\rvert \wedge w,
\]
where $w$ is positive in the tensor product. This provides a direct and simplified proof of permanence results previously obtained under extra hypotheses.
\end{abstract}

\date{\today}

\maketitle
\section{Introduction}

Unbounded convergence has become an important tool in the study of Banach lattices and vector lattices. Among the most extensively investigated notions are unbounded norm convergence ($un$-convergence), unbounded absolute weak convergence ($uaw$-convergence), and unbounded order convergence ($uo$-convergence).

It is well known that both $un$-convergence and $uaw$-convergence are topological. More precisely, they are induced by suitable locally solid topologies. In contrast, $uo$-convergence is not topological in general; that is, $uo$-convergence on a vector lattice need not be generated by a topology.

In 1972, Fremlin \cite{Fremlin:72} constructed an Archimedean vector lattice tensor product
\[
E\overline{\otimes}F
\]
for Archimedean vector lattices $E$ and $F$. Subsequently, in 1974, he introduced the Fremlin projective tensor product
\[
E\widehat{\otimes}F
\]
for Banach lattices $E$ and $F$; see \cite{Fremlin:74}. These constructions provide natural lattice-theoretic extensions of the algebraic tensor product and have important applications in the theory of vector lattices and Banach lattices.

It is therefore natural to investigate the behavior of unbounded convergence under these tensor products. More precisely, suppose that
\[
x_{\alpha} \xrightarrow{un} x \quad \text{in } E
\qquad \text{and} \qquad
y_{\beta} \xrightarrow{un} y \quad \text{in } F.
\]
Does it then follow that
\[
x_{\alpha}\otimes y_{\beta} \xrightarrow{un} x\otimes y
\]
in $E\widehat{\otimes}F$? Similar questions can be raised for $uaw$-convergence and other unbounded modes of convergence.

In \cite{Z:25}, we proved that the Fremlin tensor product of Archimedean vector lattices behaves well with respect to $uo$-convergence. More precisely, we established that appropriate $uo$-convergence assumptions on nets in the factor spaces imply $uo$-convergence of the corresponding elementary tensors in the Fremlin tensor product. On the other hand, in \cite{Z:26}, we showed that, under some additional hypotheses, the Fremlin projective tensor product of Banach lattices preserves both unbounded norm convergence and unbounded absolute weak convergence.

The purpose of the present note is to show that the additional assumptions imposed in the above results are, in fact, unnecessary. The main ingredient is a simple but useful lemma which yields an estimate for expressions of the form
\[
\lvert x_{\alpha}\otimes y_{\beta}-x\otimes y\rvert\wedge w,
\]
where $w$ is a positive element of the relevant Fremlin tensor product. This estimate allows us to reduce the problem to appropriate wedge estimates in the factor spaces and consequently obtain assumption-free proofs of the main results.

More precisely, we prove that the Fremlin projective tensor product of Banach lattices preserves both unbounded norm convergence and unbounded absolute weak convergence of elementary tensors without any additional hypotheses. Our arguments are short and transparent and rely only on the lattice structure of the tensor product together with the aforementioned estimate. In addition, we give a simplified proof of the main result of \cite{Z:25}.

\section{Preliminaries}

In this section, we recall the notation, terminology, and basic facts that will be used throughout the paper. For standard background on vector lattices and Banach lattices, we refer the reader to \cite{AB1, AB}.

\subsection{Unbounded convergence}

Let $E$ be a vector lattice and let $(x_{\alpha})$ be a net in $E$.

The net $(x_{\alpha})$ is said to be \emph{unbounded order convergent}, or \emph{$uo$-convergent}, to $x\in E$ if
\[
|x_{\alpha}-x|\wedge u \xrightarrow{o} 0
\quad \text{for every } u\in E_{+}.
\]
Equivalently, for every $u\in E_{+}$, the net
\[
\bigl(|x_{\alpha}-x|\wedge u\bigr)
\]
is order convergent to zero.

Now, assume that $E$ is a Banach lattice. The net $(x_{\alpha})$ is said to be \emph{unbounded norm convergent}, or \emph{$un$-convergent}, to $x\in E$ if
\[
\bigl\|\,|x_{\alpha}-x|\wedge u\,\bigr\| \longrightarrow 0
\quad \text{for every } u\in E_{+}.
\]

Furthermore, $(x_{\alpha})$ is said to be \emph{unbounded absolutely weakly convergent}, or \emph{$uaw$-convergent}, to $x\in E$, denoted by
\[
x_{\alpha}\xrightarrow{uaw}x,
\]
if
\[
|x_{\alpha}-x|\wedge u \xrightarrow{w} 0
\quad \text{for every } u\in E_{+}.
\]
In other words, for every $u\in E_{+}$, the net
\[
\bigl(|x_{\alpha}-x|\wedge u\bigr)
\]
is weakly null.

For further information on unbounded convergence and related notions, we refer to
\cite{Den:17, GTX:17, KMT, Z:18}.

\subsection{Fremlin tensor products}

Let $E$ and $F$ be Archimedean vector lattices. Fremlin constructed in \cite{Fremlin:72} an Archimedean vector lattice tensor product, denoted by
\[
E\overline{\otimes}F,
\]
which contains the algebraic tensor product $E\otimes F$ as a vector subspace.

Now assume that $E$ and $F$ are Banach lattices. In \cite{Fremlin:74}, Fremlin introduced the projective tensor product
\[
E\widehat{\otimes}F,
\]
which is a Banach lattice. This space is the completion of the algebraic tensor product $E\otimes F$ with respect to the projective norm $\|\cdot\|_{|\pi|}$. The norm $\|\cdot\|_{|\pi|}$ is a lattice cross-norm and satisfies
\[
\|x\otimes y\|_{|\pi|}
=
\|x\|\,\|y\|
\quad \text{for all } x\in E \text{ and } y\in F.
\]
Moreover, $E\overline{\otimes}F$ is a norm-dense vector sublattice of
$E\widehat{\otimes}F$.

\section{main results}

The following lemma is of independent interest and plays a crucial role in the subsequent results of this note.
\begin{lemma}\label{new1}
Let $E$ and $F$ be vector lattices. For any $x, a \in E_+$ and $y, b \in F_+$, the following inequality holds in the Fremlin tensor product $E \overline{\otimes}F$:
\begin{equation}
(x \otimes y) \wedge (a \otimes b) \le (x \wedge a) \otimes b + a \otimes (y \wedge b)
\end{equation}
\end{lemma}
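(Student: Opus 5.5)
Our plan is to split the first factor and use only the positivity and bilinearity of $\otimes$ on positive elements. Fremlin's construction guarantees that the map $(u,v)\mapsto u\otimes v$ is a positive bilinear map into $E\overline{\otimes}F$. In particular, $u\otimes v\ge 0$ whenever $u\ge 0$ and $v\ge 0$, and $u_1\otimes v\le u_2\otimes v$ whenever $0\le u_1\le u_2$ and $v\ge 0$; the same holds in the second variable. We also use the elementary vector-lattice fact $(p+q)\wedge r\le p\wedge r+q\wedge r$ for $p,q,r\ge 0$, which is the Riesz decomposition inequality.

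\textbf{Step 1: split $x$.} Write $x=(x\wedge a)+(x-a)^+$, a sum of two positive elements. Bilinearity gives
\[
x\otimes y=(x\wedge a)\otimes y+(x-a)^+\otimes y .
\]
Hence, by subadditivity of $\cdot\wedge(a\otimes b)$ on positive elements,
\[
(x\otimes y)\wedge(a\otimes b)\le \bigl((x\wedge a)\otimes y\bigr)\wedge(a\otimes b)+\bigl((x-a)^+\otimes y\bigr)\wedge(a\otimes b).
\]
The first term is at most $(x\wedge a)\otimes b$, because it is bounded by $a\otimes b$. That alone does not give the bound we want, so we need a cleaner way to organize the estimate.

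\textbf{Step 2: split $y$ instead.} Write $y=(y\wedge b)+(y-b)^+$. Then
\[
x\otimes y\wedge a\otimes b\le x\otimes(y\wedge b)\wedge a\otimes b+\bigl(x\otimes(y-b)^+\bigr)\wedge(a\otimes b).
\]
Now split $x$ inside the first summand: $x\otimes(y\wedge b)=(x\wedge a)\otimes(y\wedge b)+(x-a)^+\otimes(y\wedge b)$. The piece $(x\wedge a)\otimes(y\wedge b)$ is at most $(x\wedge a)\otimes b$. For the other pieces we would need to show that $\bigl((x-a)^+\otimes(y\wedge b)\bigr)\wedge(a\otimes b)$ and $\bigl(x\otimes(y-b)^+\bigr)\wedge(a\otimes b)$ are controlled by the right-hand side.

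\textbf{Step 3: disjointness, the main obstacle.} The key fact needed is that $(x-a)^+\otimes v$ and $(a-x)^+\otimes v$ type terms behave disjointly. In the Fremlin tensor product, $u_1\perp u_2$ in $E$ implies $u_1\otimes v_1\perp u_2\otimes v_2$; this is Fremlin's result that $\otimes$ is a lattice (Riesz) bimorphism. Using the bimorphism property directly is the cleaner route. For $u,v\ge0$ we have
\[
(u\otimes v)\wedge(u'\otimes v')\le (u\wedge u')\otimes(v\vee v')
\]
and symmetrically. To justify this, first reduce to the finitely generated case: represent $x,a,y,b$ inside principal ideals, realize them as continuous functions on compact spaces (Kakutani/Yosida), and use that $E\overline{\otimes}F$ embeds into functions on $K\times L$ with $u\otimes v\mapsto u(s)v(t)$. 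Such an embedding is legitimate because Fremlin's tensor product of the principal ideals is a sublattice, and lattice inequalities can be checked pointwise.

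\textbf{Step 4: pointwise check.} It remains to verify, for nonnegative reals $x,a,y,b$,
\[
\min(xy,ab)\le \min(x,a)\,b+a\min(y,b).
\]
If $x\le a$, then $\min(xy,ab)\le xb$ when $y\le b$ and $\le ab$ otherwise. In both cases this is bounded by $xb+a\min(y,b)$; indeed, if $y>b$ the right side is $xb+ab\ge ab$. If $x>a$, the right side is $ab+a\min(y,b)\ge ab\ge\min(xy,ab)$. This completes the argument.

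We expect the main obstacle to be the justification of this pointwise reduction, namely that a lattice inequality between finitely many elementary tensors in $E\overline{\otimes}F$ may be tested via a lattice-homomorphic representation into functions on a product space.
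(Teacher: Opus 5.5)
Your working argument in Steps 3--4 is correct and is essentially the paper's proof. You pass to principal ideals containing $x,a$ and $y,b$, represent them via Kakutani/Yosida as functions on compact spaces $K$, $L$, regard their Fremlin tensor product as a sublattice of $C(K\times L)$ with $u\otimes v\mapsto u(s)v(t)$, and verify $\min(xy,ab)\le \min(x,a)\,b+a\min(y,b)$ pointwise. Insisting that the ideals contain all four elements (e.g.\ those generated by $x+a$ and $y+b$) is in fact slightly more careful than the paper's use of $I_a$ and $I_b$, which need not contain $x$ and $y$.

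Delete Steps 1--2 and the disjointness/bimorphism remarks, which are never used. In particular, the Step 1 claim $\bigl((x\wedge a)\otimes y\bigr)\wedge(a\otimes b)\le (x\wedge a)\otimes b$ is false: in $E=F=\mathbb{R}$ with $x=1$, $a=2$, $y=3$, $b=1$, the left side is $2$ and the right side is $1$.
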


\begin{proof}
Let $I_a$ and $I_b$ denote the principal ideals generated by $a$ in $E$ and $b$ in $F$, respectively. By Kakutani's representation theorem, there exist compact Hausdorff spaces $K_1$ and $K_2$ and lattice isomorphisms:
\[
\phi_1: I_a \to C(K_1) \quad \text{and} \quad \phi_2: I_b \to C(K_2)
\]
such that $\phi_1(a) = \mathbf{1}_{K_1}$ and $\phi_2(b) = \mathbf{1}_{K_2}$, where $\mathbf{1}$ denotes the constantly one function. 

Note that we can consider the Fremlin tensor product $C(K_1)\overline{\otimes} C(K_2)$ as an order and norm dense sublattice of $C(K_1\times K_2)$. Under this representation, the elementary tensor $u \otimes v$ corresponds to the function $(u \otimes v)(s, t) = u(s)v(t)$ for all $s \in K_1$ and $t \in K_2$.

To establish the inequality (1), it suffices to show that the corresponding pointwise inequality holds for the functions representing these elements on $K_1 \times K_2$. That is, for every $s \in K_1$ and $t \in K_2$, we must show:
\begin{equation}
\min\{x(s)y(t), a(s)b(t)\} \le \min\{x(s), a(s)\}b(t) + a(s)\min\{y(t), b(t)\}
\end{equation}
Note that since $x, a, y, b$ are positive elements, their representatives are non-negative functions, i.e., $x(s), a(s), y(t), b(t) \ge 0$. We evaluate inequality (2) by partitioning the domain $K_1 \times K_2$ into disjoint cases based on the order relations of the coordinates:

\subsection*{Case 1: $x(s) \ge a(s)$}
In this case, we have $\min\{x(s), a(s)\} = a(s)$. The right-hand side (RHS) of (2) simplifies to:
\[
\text{RHS} = a(s)b(t) + a(s)\min\{y(t), b(t)\}
\]
For the left-hand side (LHS) of (2), by the definition of the minimum, we have:
\[
\text{LHS} = \min\{x(s)y(t), a(s)b(t)\} \le a(s)b(t)
\]
Since $a(s) \ge 0$ and $\min\{y(t), b(t)\} \ge 0$, it follows that $a(s)\min\{y(t), b(t)\} \ge 0$. Therefore:
\[
\text{LHS} \le a(s)b(t) \le a(s)b(t) + a(s)\min\{y(t), b(t)\} = \text{RHS}
\]
which proves the inequality for this case.

\subsection*{Case 2: $x(s) < a(s)$}
In this case, we have $\min\{x(s), a(s)\} = x(s)$. The RHS of (2) becomes:
\[
\text{RHS} = x(s)b(t) + a(s)\min\{y(t), b(t)\}
\]
We now partition this case into two sub-cases depending on the relation between $y(t)$ and $b(t)$:

\subsubsection*{Sub-case 2.1: $y(t) \ge b(t)$}
Here, $\min\{y(t), b(t)\} = b(t)$. The RHS simplifies to:
\[
\text{RHS} = x(s)b(t) + a(s)b(t)
\]
For the LHS, we observe:
\[
\text{LHS} = \min\{x(s)y(t), a(s)b(t)\} \le a(s)b(t)
\]
Since $x(s) \ge 0$ and $b(t) \ge 0$, we have $x(s)b(t) \ge 0$. Thus:
\[
\text{LHS} \le a(s)b(t) \le x(s)b(t) + a(s)b(t) = \text{RHS}
\]
which satisfies the inequality.

\subsubsection*{Sub-case 2.2: $y(t) < b(t)$}
Here, $\min\{y(t), b(t)\} = y(t)$. The RHS simplifies to:
\[
\text{RHS} = x(s)b(t) + a(s)y(t)
\]
For the LHS, we have:
\[
\text{LHS} = \min\{x(s)y(t), a(s)b(t)\} \le x(s)y(t)
\]
Since $x(s) \ge 0$ and $y(t) < b(t)$, we obtain $x(s)y(t) \le x(s)b(t)$. Furthermore, since $a(s) \ge 0$ and $y(t) \ge 0$, the term $a(s)y(t)$ is non-negative. Combining these yields:
\[
\text{LHS} \le x(s)y(t) \le x(s)b(t) \le x(s)b(t) + a(s)y(t) = \text{RHS}
\]
This completes the verification for all possible cases.

\medskip
Since the inequality (2) holds pointwise for all $(s, t) \in K_1 \times K_2$, the lattice isomorphism preserves the order structure, implying that:
\[
(x \otimes y) \wedge (a \otimes b) \le (x \wedge a) \otimes b + a \otimes (y \wedge b)
\]
holds in $I_a \overline{\otimes} I_b$. Since $I_a \overline{\otimes} I_b$ is a vector sublattice of $E \overline{\otimes} F$, the inequality holds globally in $E \overline{\otimes} F$.
\end{proof}
Now we present a simple proof of the main result of \cite{Z:25}
(Theorem~7).

\begin{theorem}\label{new}
Suppose that $E$ and $F$ are Archimedean vector lattices.
If $x_{\alpha}\xrightarrow{uo}x$ in $E$ and
$y_{\beta}\xrightarrow{uo}y$ in $F$, then
\[
x_{\alpha}\otimes y_{\beta}\xrightarrow{uo}x\otimes y
\quad \text{in } E\overline{\otimes}F .
\]
\end{theorem}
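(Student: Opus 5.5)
The plan is to reduce to test elements of the form $a\otimes b$ and then split the difference into pieces in which one factor is small. First I will use the standard fact about Fremlin's tensor product that every $w\in E\overline{\otimes}F$ is dominated by an elementary tensor: there are $a\in E_+$ and $b\in F_+$ with $|w|\le a\otimes b$. It then suffices to show
\[
|x_\alpha\otimes y_\beta-x\otimes y|\wedge(a\otimes b)\xrightarrow{o}0
\]
along the product index $(\alpha,\beta)$ for all $a\in E_+$ and $b\in F_+$.

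Next I bound the difference. Writing
\[
x_\alpha\otimes y_\beta-x\otimes y=(x_\alpha-x)\otimes(y_\beta-y)+(x_\alpha-x)\otimes y+x\otimes(y_\beta-y),
\]
I get that $|x_\alpha\otimes y_\beta-x\otimes y|$ is at most
\[
|x_\alpha-x|\otimes|y_\beta-y|+|x_\alpha-x|\otimes|y|+|x|\otimes|y_\beta-y|.
\]
By subadditivity $u\wedge(v_1+v_2+v_3)\le\sum_i u\wedge v_i$ for positive elements, it is enough to handle each of the three terms separately after taking the infimum with $a\otimes b$.

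\emph{Term 1.} For the first term, Lemma~\ref{new1} gives
\[
(|x_\alpha-x|\otimes|y_\beta-y|)\wedge(a\otimes b)\le(|x_\alpha-x|\wedge a)\otimes b+a\otimes(|y_\beta-y|\wedge b).
\]

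\emph{Terms 2 and 3.} For these the Lemma alone does not suffice, because the fixed factor does not become small. I will instead use a variant proved by the same Kakutani/pointwise method. If $X,A\le C$ are positive, then $\min\{X(s)Y(t),A(s)B(t)\}\le C(s)\min\{Y(t),B(t)\}$. Hence
\[
(|x|\otimes|y_\beta-y|)\wedge(a\otimes b)\le c\otimes(|y_\beta-y|\wedge b),\qquad c=|x|\vee a .
\]
The pointwise argument takes place in the principal ideal generated by $c$ and $b$, exactly as in Lemma~\ref{new1}. Symmetrically,
\[
(|x_\alpha-x|\otimes|y|)\wedge(a\otimes b)\le(|x_\alpha-x|\wedge a)\otimes d,\qquad d=|y|\vee b .
\]

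\emph{Assembling.} Every term is now of the form $e\otimes z_\beta$ or $z_\alpha\otimes e$, with $e$ a fixed positive element and $z$ an order-null net: $z_\beta=|y_\beta-y|\wedge b$ or $z_\alpha=|x_\alpha-x|\wedge a$, both order null by $uo$-convergence in the factors. The remaining step, which I expect to be the main obstacle, is showing that $z\mapsto e\otimes z$ maps order-null nets to order-null nets in $E\overline{\otimes}F$. Since it is a positive operator, this reduces to showing that $z_\gamma\downarrow0$ implies $e\otimes z_\gamma\downarrow0$. I would take this from Fremlin's theory (the Fremlin tensor product preserves such infima, as used in \cite{Z:25}). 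Failing that, I would argue via the $C(K_1\times K_2)$ representation together with the order density of $E\overline{\otimes}F$ there.

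With this in hand, I take dominating nets $g_\gamma\downarrow0$ for $(z_\alpha)$ and $h_\delta\downarrow0$ for $(z_\beta)$ and dominate the sum by $e\otimes h_\delta+e'\otimes g_\gamma$, which decreases to $0$. Because each estimate depends only on $\alpha$ or only on $\beta$, it persists along the product index $(\alpha,\beta)$, and this yields order convergence of the whole expression to $0$.
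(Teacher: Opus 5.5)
Your proposal is correct and follows essentially the same route as the paper. You dominate $w$ by an elementary tensor, split the difference into the same three terms, and handle the cross term with Lemma~\ref{new1}. You bound the other two terms by $(|x|\vee x_0)\otimes(|y_\beta-y|\wedge y_0)$ and $(|x_\alpha-x|\wedge x_0)\otimes(|y|\vee y_0)$. Finally, you use order continuity of $z\mapsto e\otimes z$, which the paper takes from \cite[Proposition 8]{Z:25}. Your explicit justification of the bound for the two non-cross terms, which Lemma~\ref{new1} alone does not give, makes precise a step the paper passes over silently.
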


\begin{proof}
Let $w\in (E\overline{\otimes}F)_{+}$ be arbitrary.
By \cite[1A(d)]{Fremlin:74}, there exist elements
$x_0\in E_{+}$ and $y_0\in F_{+}$ such that
\[
w\le x_0\otimes y_0.
\]
Using Lemma~\ref{new1}, we obtain
\begin{align*}
\lvert x_{\alpha} \otimes y_{\beta} - x \otimes y \rvert \wedge w 
&\le \lvert x_{\alpha} \otimes y_{\beta} - x \otimes y \rvert \wedge (x_0 \otimes y_0) \\
&= \lvert x_{\alpha} \otimes (y_{\beta} - y) + (x_{\alpha} - x) \otimes y \rvert \wedge (x_0 \otimes y_0) \\
&\le \bigl(\lvert x_{\alpha} \rvert \otimes \lvert y_{\beta} - y \rvert\bigr) \wedge (x_0 \otimes y_0)
   + \bigl(\lvert x_{\alpha} - x \rvert \otimes \lvert y \rvert\bigr) \wedge (x_0 \otimes y_0) \\
&\le \bigl(\lvert x_{\alpha} - x \rvert \otimes \lvert y_{\beta} - y \rvert\bigr) \wedge (x_0 \otimes y_0)
   + \bigl(\lvert x \rvert \otimes \lvert y_{\beta} - y \rvert\bigr) \wedge (x_0 \otimes y_0) \\
&\quad + \bigl(\lvert x_{\alpha} - x \rvert \otimes \lvert y \rvert\bigr) \wedge (x_0 \otimes y_0) \\
&\le \bigl(\lvert x_{\alpha} - x \rvert \wedge x_0\bigr) \otimes y_0
   + x_0 \otimes \bigl(\lvert y_{\beta} - y \rvert \wedge y_0\bigr) \\
&\quad + \bigl(\lvert x \rvert \vee x_0\bigr) \otimes \bigl(\lvert y_{\beta} - y \rvert \wedge y_0\bigr)
   + \bigl(\lvert x_{\alpha} - x \rvert \wedge x_0\bigr) \otimes \bigl(\lvert y \rvert \vee y_0\bigr).
\end{align*}

The right-hand expressions are order null by using the assumption and also \cite[Proposition 8]{Z:25}. This shows that $x_{\alpha}\otimes y_{\beta}\xrightarrow{uo}x\otimes y$ in $E\overline{\otimes}F$. This would complete the proof. 

\end{proof}

\begin{theorem}\label{un}
Suppose that $E$ and $F$ are Banach lattices.
If $x_{\alpha}\xrightarrow{un}x$ in $E$ and
$y_{\beta}\xrightarrow{un}y$ in $F$, then
\[
x_{\alpha}\otimes y_{\beta}\xrightarrow{un}x\otimes y
\quad \text{in } E\widehat{\otimes}F .
\]
\end{theorem}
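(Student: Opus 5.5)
The plan is to rerun the chain of inequalities from the proof of Theorem~\ref{new}, with norm estimates in place of order convergence. The one new difficulty is that a positive element of $E\widehat{\otimes}F$ need not be dominated by an elementary tensor, so we first reduce to test elements lying in $E\overline{\otimes}F$. Throughout, the double net $(x_\alpha\otimes y_\beta)$ is indexed by the product directed set, and we write $z_{\alpha,\beta}=x_{\alpha}\otimes y_{\beta}-x\otimes y$.

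\emph{Step 1: approximation of the test element.} Fix $w\in(E\widehat{\otimes}F)_+$ and $\varepsilon>0$. Since $E\overline{\otimes}F$ is a norm-dense vector sublattice of $E\widehat{\otimes}F$, pick $w'\in E\overline{\otimes}F$ with $\|w-w'\|_{|\pi|}<\varepsilon$. Replacing $w'$ by $|w'|$ keeps the estimate, because $\bigl||w'|-w\bigr|\le|w'-w|$; so we may assume $w'\ge 0$. The elementary inequality $|z|\wedge w\le |z|\wedge w'+|w-w'|$ holds in any vector lattice. Hence
\[
\bigl\||z_{\alpha,\beta}|\wedge w\bigr\|_{|\pi|}\le \bigl\||z_{\alpha,\beta}|\wedge w'\bigr\|_{|\pi|}+\varepsilon .
\]
It therefore suffices to show that $\||z_{\alpha,\beta}|\wedge w'\|_{|\pi|}\to0$ for every $w'\in(E\overline{\otimes}F)_+$. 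The fact $\limsup\le\varepsilon$ for every $\varepsilon$ then gives the claim.

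\emph{Step 2: lattice estimate.} For $w'\in(E\overline{\otimes}F)_+$, \cite[1A(d)]{Fremlin:74} gives $x_0\in E_+$ and $y_0\in F_+$ with $w'\le x_0\otimes y_0$. We then repeat verbatim the chain of inequalities in the proof of Theorem~\ref{new}. It uses the decomposition
\[
x_\alpha\otimes y_\beta-x\otimes y=(x_\alpha-x)\otimes(y_\beta-y)+x\otimes(y_\beta-y)+(x_\alpha-x)\otimes y,
\]
the subadditivity $|u+v|\wedge c\le |u|\wedge c+|v|\wedge c$, the inequality $|u\otimes v|\le |u|\otimes|v|$, and Lemma~\ref{new1}. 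The result is the bound
\[
|z_{\alpha,\beta}|\wedge w'\le (|x_\alpha-x|\wedge x_0)\otimes y_0+x_0\otimes(|y_\beta-y|\wedge y_0)+(|x|\vee x_0)\otimes(|y_\beta-y|\wedge y_0)+(|x_\alpha-x|\wedge x_0)\otimes(|y|\vee y_0).
\]
This inequality holds in $E\overline{\otimes}F$, and hence in $E\widehat{\otimes}F$, since the former is a sublattice of the latter.

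\emph{Step 3: norm estimates.} The norm $\|\cdot\|_{|\pi|}$ is a lattice norm, so $0\le u\le v$ implies $\|u\|_{|\pi|}\le\|v\|_{|\pi|}$. It is also a cross-norm, so each term above has norm equal to a product. For example,
\[
\|(|x_\alpha-x|\wedge x_0)\otimes(|y|\vee y_0)\|_{|\pi|}=\bigl\||x_\alpha-x|\wedge x_0\bigr\|\,\bigl\||y|\vee y_0\bigr\|.
\]
By the $un$-hypotheses, $\||x_\alpha-x|\wedge x_0\|\to0$ and $\||y_\beta-y|\wedge y_0\|\to0$, while the other factors are fixed constants. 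So the whole bound tends to $0$ along the product index, which completes the argument.

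The main obstacle is Step 1. The domination result \cite[1A(d)]{Fremlin:74} applies only in $E\overline{\otimes}F$, not in its completion. The density argument, together with the Lipschitz property of $w\mapsto|z|\wedge w$, is what removes the extra assumptions used in \cite{Z:26}.
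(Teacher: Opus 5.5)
Your proposal is correct and follows the paper's proof: the same domination $w'\le x_0\otimes y_0$ from \cite[1A(d)]{Fremlin:74}, the same chain of inequalities built on Lemma~\ref{new1}, and the same cross-norm estimates. The only differences are minor. You prove the reduction to test elements in $E\overline{\otimes}F$ directly, using $\lvert z\rvert\wedge w\le \lvert z\rvert\wedge w'+\lvert w-w'\rvert$ and norm density, where the paper cites \cite[Lemma 3.4]{Tay:19}. You also start from the three-term decomposition instead of splitting $\lvert x_\alpha\rvert$ afterwards.
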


\begin{proof}
By \cite[Lemma 3.4]{Tay:19}, it is enough to prove that the convergence happens in $E\overline{\otimes}F$. Let $w\in (E\overline{\otimes}F)_{+}$ be arbitrary.
By \cite[1A(d)]{Fremlin:74}, there exist elements
$x_0\in E_{+}$ and $y_0\in F_{+}$ such that
\[
w\le x_0\otimes y_0.
\]
Using Lemma~\ref{new1}, we obtain
\begin{align*}
\lvert x_{\alpha} \otimes y_{\beta} - x \otimes y \rvert \wedge w 
&\le \lvert x_{\alpha} \otimes y_{\beta} - x \otimes y \rvert \wedge (x_0 \otimes y_0) \\
&= \lvert x_{\alpha} \otimes (y_{\beta} - y) + (x_{\alpha} - x) \otimes y \rvert \wedge (x_0 \otimes y_0) \\
&\le \bigl(\lvert x_{\alpha} \rvert \otimes \lvert y_{\beta} - y \rvert\bigr) \wedge (x_0 \otimes y_0)
   + \bigl(\lvert x_{\alpha} - x \rvert \otimes \lvert y \rvert\bigr) \wedge (x_0 \otimes y_0) \\
&\le \bigl(\lvert x_{\alpha} - x \rvert \otimes \lvert y_{\beta} - y \rvert\bigr) \wedge (x_0 \otimes y_0)
   + \bigl(\lvert x \rvert \otimes \lvert y_{\beta} - y \rvert\bigr) \wedge (x_0 \otimes y_0) \\
&\quad + \bigl(\lvert x_{\alpha} - x \rvert \otimes \lvert y \rvert\bigr) \wedge (x_0 \otimes y_0) \\
&\le \bigl(\lvert x_{\alpha} - x \rvert \wedge x_0\bigr) \otimes y_0
   + x_0 \otimes \bigl(\lvert y_{\beta} - y \rvert \wedge y_0\bigr) \\
&\quad + \bigl(\lvert x \rvert \vee x_0\bigr) \otimes \bigl(\lvert y_{\beta} - y \rvert \wedge y_0\bigr)
   + \bigl(\lvert x_{\alpha} - x \rvert \wedge x_0\bigr) \otimes \bigl(\lvert y \rvert \vee y_0\bigr).
\end{align*}
So that 
\begin{align*}
\bigl\|\,|x_{\alpha}\otimes y_{\beta}-x\otimes y|\wedge w\,\bigr\|
&\le
\bigl\|\,|x_{\alpha}-x|\wedge x_0\,\bigr\|\,\|y_0\| \\
&\quad +
\|x_0\|\,\bigl\|\,|y_{\beta}-y|\wedge y_0\,\bigr\| \\
&\quad +
\bigl\|x_0\vee |x|\bigr\|\,
\bigl\|\,|y_{\beta}-y|\wedge y_0\,\bigr\| \\
&\quad +
\bigl\|\,|x_{\alpha}-x|\wedge x_0\,\bigr\|\,
\bigl\|y_0\vee |y|\bigr\|.
\end{align*}

The right-hand expressions are norm null by using the assumption. This shows that $x_{\alpha}\otimes y_{\beta}\xrightarrow{un}x\otimes y$ in $E\overline{\otimes}F$ so that in $E\widehat{\otimes}F$. This would complete the proof. 

\end{proof}

\begin{theorem}\label{uaw}
Suppose that $E$ and $F$ are Banach lattices.
If $x_{\alpha}\xrightarrow{uaw}x$ in $E$ and
$y_{\beta}\xrightarrow{uaw}y$ in $F$, then
\[
x_{\alpha}\otimes y_{\beta}\xrightarrow{uaw}x\otimes y
\quad \text{in } E\widehat{\otimes}F .
\]
\end{theorem}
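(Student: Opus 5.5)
The plan is to reuse the lattice estimate from the proof of Theorem~\ref{un}, which rests only on Lemma~\ref{new1} and \cite[1A(d)]{Fremlin:74}. In place of the norm estimate I will use the fact that the relevant bilinear maps into $E\widehat{\otimes}F$ are weak-to-weak continuous in each variable separately. The net $(x_\alpha\otimes y_\beta)$ is indexed by the product directed set $(\alpha,\beta)$. One more step is needed that the $un$ case did not require: reducing to test elements $w$ from the dense sublattice $E\overline{\otimes}F$. \cite[Lemma 3.4]{Tay:19} is a statement about $un$-convergence, so I will prove this reduction directly.

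\emph{Step 1: reduction to $w\in(E\overline{\otimes}F)_+$.} Let $w\in(E\widehat{\otimes}F)_+$, $\varepsilon>0$, and $f\in(E\widehat{\otimes}F)^*$. Since $E\overline{\otimes}F$ is a norm-dense sublattice, there is $w'\in(E\overline{\otimes}F)_+$ with $\|w-w'\|<\varepsilon$. For any $z$ we have the lattice inequality $\bigl||z|\wedge w-|z|\wedge w'\bigr|\le|w-w'|$, and therefore
\[
\bigl|f(|z|\wedge w)-f(|z|\wedge w')\bigr|\le\|f\|\,\varepsilon .
\]
So it suffices to show that $f\bigl(|x_\alpha\otimes y_\beta-x\otimes y|\wedge w'\bigr)\to0$ for all such $w'$. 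Every $f$ is a difference of positive functionals, so I may also assume $f\ge0$.

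\emph{Step 2: the domination.} With $w'\le x_0\otimes y_0$, the chain of inequalities from the proof of Theorem~\ref{un} gives
\[
0\le|x_\alpha\otimes y_\beta-x\otimes y|\wedge w'\le \bigl(|x_\alpha-x|\wedge x_0\bigr)\otimes \bigl(y_0+(|y|\vee y_0)\bigr)+\bigl(x_0+(|x|\vee x_0)\bigr)\otimes\bigl(|y_\beta-y|\wedge y_0\bigr).
\]
Call the two terms on the right $s_\alpha$ and $t_\beta$. For positive $f$ this yields $0\le f(\cdot)\le f(s_\alpha)+f(t_\beta)$. It therefore suffices to show that $s_\alpha\to0$ and $t_\beta\to0$ weakly in $E\widehat{\otimes}F$. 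Convergence along the product index then follows.

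\emph{Step 3: weak nullness of $s_\alpha$ and $t_\beta$.} Fix $v=y_0+(|y|\vee y_0)\in F_+$. The map $T_v:E\to E\widehat{\otimes}F$, $u\mapsto u\otimes v$, is linear and bounded with $\|T_v\|\le\|v\|$, because $\|\cdot\|_{|\pi|}$ is a cross-norm. A bounded operator is weak-to-weak continuous, and by the $uaw$ hypothesis $|x_\alpha-x|\wedge x_0\xrightarrow{w}0$ in $E$. Hence $s_\alpha=T_v(|x_\alpha-x|\wedge x_0)\xrightarrow{w}0$. The same argument with $u\mapsto (x_0+(|x|\vee x_0))\otimes u$ on $F$ handles $t_\beta$.

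I expect the main obstacle to be Step 1. The estimate of Lemma~\ref{new1} is only available for test elements in $E\overline{\otimes}F$, and for $uaw$-convergence one cannot simply cite the $un$ reduction lemma; this is why Step 1 supplies the Lipschitz-in-$w$ argument above. The passage from weak nullness of the dominating sum to weak nullness of the dominated positive net uses positivity of the functionals, and it holds because functionals on a Banach lattice decompose into positive parts.
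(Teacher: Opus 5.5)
Your argument is correct and is essentially the paper's proof. Both use the same domination obtained from Lemma~\ref{new1} and \cite[1A(d)]{Fremlin:74}, followed by weak nullness of each dominating term; your bounded maps $u\mapsto u\otimes v$ play exactly the role of the paper's partial functionals $f(\cdot,v)$ of the positive bilinear form representing $\widetilde f\in(E\widehat{\otimes}F)'_+$. The differences are presentational: you prove the reduction to $w\in(E\overline{\otimes}F)_+$ directly via $\bigl||z|\wedge w-|z|\wedge w'\bigr|\le|w-w'|$ instead of citing \cite[Lemma 3.4]{Tay:19}, and you make explicit the decomposition $f=f^+-f^-$ and the passage to the product index, which the paper leaves implicit.
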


\begin{proof}
By \cite[Lemma 3.4]{Tay:19}, it is enough to prove that the convergence happens in $E\overline{\otimes}F$. Let $w\in (E\overline{\otimes}F)_{+}$ be arbitrary.
By \cite[1A(d)]{Fremlin:74}, there exist elements
$x_0\in E_{+}$ and $y_0\in F_{+}$ such that
\[
w\le x_0\otimes y_0.
\]
Using Lemma~\ref{new1}, we obtain
\begin{align*}
\lvert x_{\alpha} \otimes y_{\beta} - x \otimes y \rvert \wedge w 
&\le \lvert x_{\alpha} \otimes y_{\beta} - x \otimes y \rvert \wedge (x_0 \otimes y_0) \\
&= \lvert x_{\alpha} \otimes (y_{\beta} - y) + (x_{\alpha} - x) \otimes y \rvert \wedge (x_0 \otimes y_0) \\
&\le \bigl(\lvert x_{\alpha} \rvert \otimes \lvert y_{\beta} - y \rvert\bigr) \wedge (x_0 \otimes y_0)
   + \bigl(\lvert x_{\alpha} - x \rvert \otimes \lvert y \rvert\bigr) \wedge (x_0 \otimes y_0) \\
&\le \bigl(\lvert x_{\alpha} - x \rvert \otimes \lvert y_{\beta} - y \rvert\bigr) \wedge (x_0 \otimes y_0)
   + \bigl(\lvert x \rvert \otimes \lvert y_{\beta} - y \rvert\bigr) \wedge (x_0 \otimes y_0) \\
&\quad + \bigl(\lvert x_{\alpha} - x \rvert \otimes \lvert y \rvert\bigr) \wedge (x_0 \otimes y_0) \\
&\le \bigl(\lvert x_{\alpha} - x \rvert \wedge x_0\bigr) \otimes y_0
   + x_0 \otimes \bigl(\lvert y_{\beta} - y \rvert \wedge y_0\bigr) \\
&\quad + \bigl(\lvert x \rvert \vee x_0\bigr) \otimes \bigl(\lvert y_{\beta} - y \rvert \wedge y_0\bigr)
   + \bigl(\lvert x_{\alpha} - x \rvert \wedge x_0\bigr) \otimes \bigl(\lvert y \rvert \vee y_0\bigr).
\end{align*}
Recall that $(E\widehat{\otimes}F)'_{+}=B^{r}(E\times F)_{+}$, where $B^{r}(E\times F)$ denotes the Banach lattice of all regular bounded bilinear forms on $E\times F$.
Under this identification, each $\widetilde{f}\in (E\widehat{\otimes}F)'_{+}$ corresponds to some $f\in B^{r}(E\times F)_{+}$ such that
\[
\widetilde{f}(x\otimes y)=f(x,y), \qquad x\in E,\; y \in F,
\]
see \cite[5I]{Fremlin:74} for more details. 

Take any $\widetilde{f}\in (E\widehat{\otimes}F)'_{+}$. We have

\begin{align*}
\widetilde{f}(|x_{\alpha}\otimes y_{\beta}-x\otimes y|\wedge w)
&\le
\widetilde{f}((|x_{\alpha}-x|\wedge x_0)\otimes y_0) \\
&\quad +
\widetilde{f}(x_0\otimes (|y_{\beta}-y|\wedge y_0))\\
&\quad +
\widetilde{f}((x_0\vee |x|)\otimes (|y_{\beta}-y|\wedge y_0))\\
&\quad +
\widetilde{f}(|x_{\alpha}-x|\wedge x_0\otimes (y_0\vee |y|)).
\end{align*}

The right-hand expressions are norm null by using the assumption. Since the mappings $f_{x_{0}\vee |x|}:F\to\mathbb{R}$ and $f_{y_{0}\vee |y|}:E\to\mathbb{R}$ are positive linear functionals, both terms converge to zero.
This shows that $x_{\alpha}\otimes y_{\beta}\xrightarrow{uaw}x\otimes y$ in $E\overline{\otimes}F$ so that in $E\widehat{\otimes}F$. This would complete the proof. 

\end{proof}

\textbf{The author admits that there is no conflict of interest in this paper and he wrote the paper, thoroughly}. 


\begin{thebibliography}{1}
\bibitem{AB1}
C.~D.~Aliprantis and O.~Burkinshaw,
\emph{Locally Solid Riesz Spaces with Applications to Economics},
Mathematical Surveys and Monographs, Vol.~105,
American Mathematical Society, Providence, RI, 2003.

\bibitem{AB}
C.~D.~Aliprantis and O.~Burkinshaw,
\emph{Positive Operators},
Springer, Dordrecht, 2006.

\bibitem{Den:17}
Y.~Deng, M.~O'Brien, and V.~G.~Troitsky,
Unbounded norm convergence in Banach lattices,
\emph{Positivity} \textbf{21} (2017), no.~3, 963--974.

\bibitem{Fremlin:72}
D.~H.~Fremlin,
Tensor products of Archimedean vector lattices,
\emph{Amer.\ J.\ Math.} \textbf{94} (1972), 777--798.

\bibitem{Fremlin:74}
D.~H.~Fremlin,
Tensor products of Banach lattices,
\emph{Math.\ Ann.} \textbf{211} (1974), 87--106.

\bibitem{GTX:17}
N.~Gao, V.~G.~Troitsky, and F.~Xanthos,
Unbounded order convergence and applications to Ces\`aro means in Banach lattices,
\emph{Israel J.\ Math.} \textbf{220} (2017), 649--689.

\bibitem{KMT}
M.~Kandi\'{c}, M.~A.~A.~Marabeh, and V.~G.~Troitsky,
Unbounded norm topology in Banach lattices,
\emph{J.\ Math.\ Anal.\ Appl.} \textbf{451} (2017), no.~1, 259--279.


\bibitem{Tay:19}
M.~A.~Taylor,
Unbounded topologies and uo-convergence in locally solid vector lattices,
\emph{J.\ Math.\ Anal.\ Appl.} \textbf{472} (2019), no.~1, 981--1000.



\bibitem{Z:18}
O.~Zabeti,
Unbounded absolute weak convergence in Banach lattices,
\emph{Positivity} \textbf{22} (2018), no.~1, 501--505.

\bibitem{Z:26}
O.~Zabeti,
Fremlin tensor  product of Banach lattices and
unbounded convergences,
Submitted.

\bibitem{Z:25}
O.~Zabeti,
Fremlin tensor product behaves well with the unbounded order convergence,
\emph{Acta Sci.\ Math.\ (Szeged)} (2025),
doi:10.1007/s44146-025-00183-9.

  \end{thebibliography}
\end{document}